\newtheorem{Th}{Theorem}
\newtheorem{pro}{Proposition}
\newtheorem{cor}{Corollary}
\newtheorem{question}{Question}
\theoremstyle{definition}
\newtheorem{rem}{Remark}
\newcommand{\Tr}{\Delta}
\newcommand{\bbR}{\mathbb{R}}
\newcommand{\bbZ}{\mathbb{Z}}
\newcommand{\bbN}{\mathbb{N}}
\newcommand{\tH}{\widetilde{\mathrm{Homeo}}_+(S^1)}
\newcommand{\trot}{\mathop{\mathrm{rot}^{\sim}}}
\newcommand{\id}{\mathop{\mathrm{id}}}
\newcommand{\tz}{\mathcal Z}
\title{Self-similarity of Jankins-Neumann ziggurat}
\author{A.S. Gordenko}
\begin{document}
\maketitle


\section{Introduction}

In the theory of dynamical systems on the circle, there is the following very natural question: let $a,b\in \tH$ be lifts on the real line of two orientation-preserving circle homeomorphisms, and we know their rotation (or, more precisely, translation) numbers $\trot(a),\trot(b)\in\bbR$. What can be said about the translation number of their composition~$ab$?

Another, more general, form of the same question was studied in a work~\cite{JN} of Jankins and Neumann. It have had topological origins: the question of classification of 3-manifolds, admitting at the same time a Seifert fibration and a codimension one foliation transverse to it. By the moment of Jankins--Neumann's work, the only non-studied case was the one of a manifold fibered over a 2-sphere. Via the study of the corresponding holonomy maps, this have led them to the following question:
\begin{question}\label{q:JN}
Given $a_1, a_2, \dots, a_n \in [0,1]$, $n\ge 3$, when do there exist lifts $f_1, f_2, \dots, f_n$ of orientation-preserving homeomorphisms of the circle with $\trot(f_i)=a_i$, such that $f_1\dots f_n=id$?
\end{question}
They have suggested a conjectural answer to this question, also proving that it suffices to establish their conjecture for $n=3$ and that for $n=3$ their conjecture holds at least $99.9\%$ of the volume of the set. The set they have proposed for $n=3$ was later called the Jankins-Neumann ziggurat due to its stepwise nature (see~Fig.~\ref{f1}).


Their conjecture was proven by Naimi in~\cite{Nai}:
\begin{Th}[Naimi; conjecture of Jankins-Neumann]\label{zigJN}
The set defined in Question~\ref{q:JN} for $n=3$ is the union of parallelepipeds 
$$
[0;\frac{a}{m}]\times [0;\frac{m-a}{m}]\times [0;\frac{1}{m}]
$$
for all coprime $0<a<m$, and of their images under all the permutations of the coordinates.
\end{Th}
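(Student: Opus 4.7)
The plan is to prove both inclusions separately.

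For the realizability direction, I would first reduce to realizing the corner $(a/m, (m-a)/m, 1/m)$ for each coprime pair $0 < a < m$, and then fill in each parallelepiped by a plateau-insertion argument. The reduction rests on three structural properties of the realizable set $\mathcal R \subseteq [0,1]^3$: (i) $\mathcal R$ is closed, by continuity of composition in $\tH$ and semicontinuity of $\trot$; (ii) $\mathcal R$ is invariant under coordinate permutations, from $f_1 f_2 f_3 = \id \Leftrightarrow f_2 f_3 f_1 = \id$ for cyclic shifts together with a reversal argument for transpositions; and (iii) given a corner realization, any $(b_1, b_2, b_3)$ with $b_i$ smaller than the corresponding corner coordinate is realized by inserting coordinated fixed-point plateaus into $f_1, f_2, f_3$, which decrease $\trot(f_i)$ while the defect in the identity relation can be absorbed into matching modifications of the other two factors. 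The corner realization itself I would construct explicitly: partition $S^1$ into $m$ equal arcs indexed by $\bbZ/m\bbZ$ and define $f_1, f_2, f_3$ as piecewise-rigid maps that cyclically shift the arcs by $a$, $m-a$, and $1$ modulo $m$ respectively, with the restriction to each arc chosen so that the composition fixes every point. The arithmetic identity $a + (m-a) + 1 \equiv 1 \pmod m$ is what makes this combinatorial scheme close up when $\gcd(a, m) = 1$.

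For the non-realizability direction, I would assume that $(a_1, a_2, a_3)$ lies outside every parallelepiped and aim for a contradiction with $f_1 f_2 f_3 = \id$. The geometry of the ziggurat forces the existence of coprime $0 < a < m$ and a permutation of indices such that $a_1 > a/m$, $a_2 > (m-a)/m$, and $a_3 > 1/m$ simultaneously. The basic tool is the Poincar\'e criterion $\trot(f) > p/q \iff f^q(x) > x + p$ for every $x \in \bbR$, which yields the pointwise bounds $f_1^m(x) > x + a$, $f_2^m(x) > x + m - a$, and $f_3^m(x) > x + 1$. The challenge is that $(f_1 f_2 f_3)^m \neq f_1^m f_2^m f_3^m$, so combining these bounds into the desired contradiction $(f_1 f_2 f_3)^m(x) > x$ requires a delicate rearrangement that tracks how often each $f_i$ \emph{completes} its $m$-th iterate along the composite orbit; a naive application of the $\trot$ quasimorphism inequality only recovers the much weaker bound $a_1 + a_2 + a_3 \leq 2$.

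The main obstacle is exactly this combinatorial step, where the full Farey arithmetic enters. The strategy I would pursue is induction on $m$ along the Farey tree: the coprime pair $(a, m)$ is the mediant of Farey neighbours $(a', m')$ and $(a'', m'')$ with $m' + m'' = m$ and $a'm - am' = 1$, and one reduces the bound for $m$ to bounds for $m'$ and $m''$ by decomposing the orbit structure of $f_1 f_2 f_3$ along the Farey splitting. The base case $m = 1$ is the easy bound $a_1 + a_2 + a_3 \leq 2$ from the quasimorphism property of $\trot$, while the inductive gluing of partial orbit inequalities into a global pointwise inequality on $(f_1 f_2 f_3)^m$ is where I expect the bulk of the technical work to lie.
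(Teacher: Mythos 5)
Your proposal has two genuine gaps, both in the non-realizability direction, and they are exactly where the substance of Naimi's theorem lives. First, the claimed reduction --- that any point outside the union admits coprime $0<a<m$ and a permutation with \emph{all three} strict inequalities $a_1>\frac{a}{m}$, $a_2>\frac{m-a}{m}$, $a_3>\frac{1}{m}$ --- is not a formality. Lying outside every parallelepiped only says that for each $(a,m)$ and each permutation \emph{at least one} inequality holds; upgrading this to a single $(a,m)$ for which \emph{all three} hold is a nontrivial covering statement about Farey fractions, and it is outright false on the boundary: the point $(\frac12+\epsilon,\frac12,0)$ lies outside every parallelepiped, yet no choice works because the coordinate $0$ can never exceed any of $\frac{a}{m}$, $\frac{m-a}{m}$, $\frac{1}{m}$. (Such points must be handled separately, and even for interior points the claim needs the same Farey-neighbour estimate that constitutes the paper's entire Section~\ref{s:equivalence}: there, the inequality $\frac{c}{d}-\frac{a}{b}=\frac{1}{bd}$ for consecutive Farey fractions is what produces the required fraction $\frac{m}{n}$ in the interval $[\frac{p_2}{q},\frac{q-p_3}{q}]$.) Second, the core dynamical step --- combining $f_1^m(x)>x+a$, $f_2^m(x)>x+m-a$, $f_3^m(x)>x+1$ into a contradiction with $f_1f_2f_3=\id$ --- is precisely the hard content of the theorem (equivalently, of the upper bound in the Calegari--Walker $ab$-Theorem), and you explicitly defer it: no inductive statement along the Farey tree is formulated, and no mechanism for ``gluing partial orbit inequalities'' is given. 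As you yourself note, the quasimorphism bound only yields $a_1+a_2+a_3\le 2$, so everything rests on this unexecuted step.

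For comparison, the paper does not reprove Naimi's theorem dynamically: it takes the Calegari--Walker formula~\eqref{formula} as input, rewrites the resulting set as the union of $\Pi(\frac{p_1}{q},\frac{p_2}{q},\frac{p_3}{q})$ over $p_1+p_2+p_3=q+1$, and then shows via the Farey-neighbour identity that every such parallelepiped is contained in a ``good'' one $\Pi(\frac{1}{n},\frac{m}{n},\frac{n-m}{n})$. All the dynamics is thus outsourced to Theorem~\ref{t:ab}, and only elementary Farey arithmetic remains. If you are permitted to assume Theorem~\ref{t:ab}, that route is far shorter and actually closes; if you are not, your sketch currently replaces the hardest part of the argument with a statement of intent. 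A minor further point: your corner construction by rigidly shifting $m$ arcs by $a$, $m-a$ and $1$ composes to a shift by $m+1\equiv 1\pmod m$, so the composition is a lift of the identity translating by one full turn rather than fixing every point; this is consistent with the normalization implicit in Proposition~\ref{zigCW} (the product is $x\mapsto x+1$), but you should state the convention, since under the literal reading $f_1f_2f_3=\id$ with $\trot(f_i)=a_i$ exactly, even the vertex $(\frac12,\frac12,\frac12)$ would fail.
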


Later, Calegari and Walker attacked the question of the rotation number of the composition from the dynamical point of view. They have obtained an ``algorithmic'' description for analogous sets for any positive composition of two homeomorphisms with given rotation numbers:

\begin{figure}[h]
 \noindent\centering{
 \includegraphics[width=120mm]{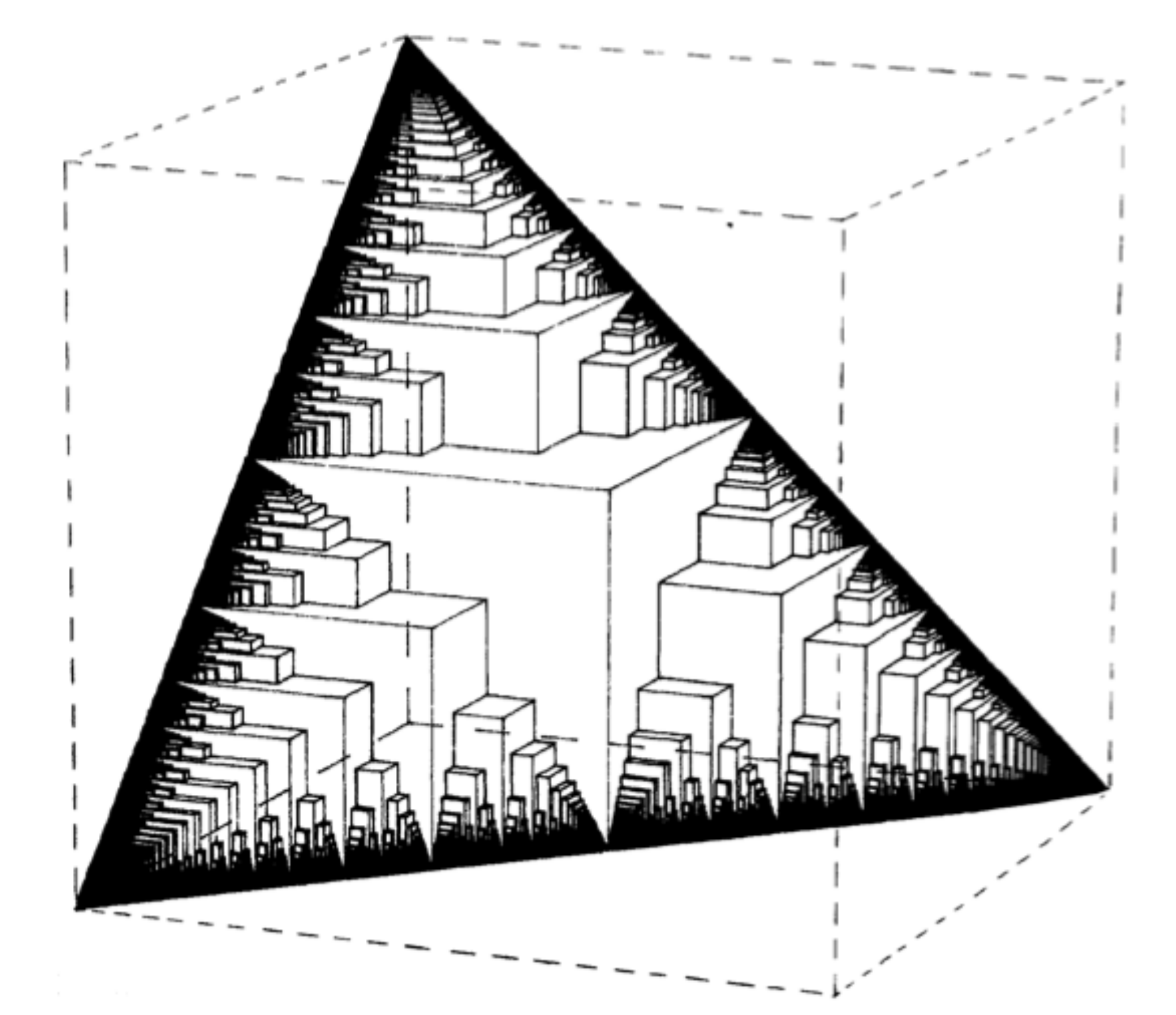} }
\caption{Jankins-Neumann ziggurat (picture credit: Jankins-Neumann~\cite{JN})}
\label{f1} \end{figure}

\begin{Th}[Calegari--Walker, \mbox{\cite{Caleg}}]\label{t:w}
For any word $w$ in the alphabet $a,b$, one has
$$
\{\trot(w(a,b)) \mid \trot(a)=x, \trot(b)=y\} = [r_{w}(x,y), R_{w}(x,y)]
$$
for certain functions $r_w, R_w:\bbR\to \bbR$, and one has $r_w(x,y)=-R_w(-x,-y)$. If the word $w$ is positive (i.e. contains no $a^{-1}$ or $b^{-1}$), there is an explicit algorithm to compute the functions $r_w$, $R_w$ at any rational point $(x,y)$.
\end{Th}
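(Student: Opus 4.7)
The plan is to handle the three assertions in sequence: (i) the set $\{\trot(w(a,b))\}$ is a closed interval; (ii) the symmetry $r_w(x,y)=-R_w(-x,-y)$; and (iii) the explicit algorithm for positive words at rational points. The main substance lies in (iii); assertions (i) and (ii) are relatively soft.

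\textbf{Interval structure.} First I would show that $\trot^{-1}(x)\subset \tH$ is path-connected: any $f\in \trot^{-1}(x)$ is Poincar\'e semi-conjugate to the translation $T_x(t)=t+x$, and one can interpolate the semi-conjugacy to build a path from $f$ to $T_x$ inside $\trot^{-1}(x)$. Consequently $\trot^{-1}(x)\times \trot^{-1}(y)$ is path-connected, and since composition and $\trot$ are continuous in the compact-open topology, the image under $(a,b)\mapsto \trot(w(a,b))$ is a connected subset of $\bbR$, i.e.\ an interval. For closedness (attainment of the endpoints) I would use the classical bound $|f(0)-\trot(f)|\le 1$ for any lift $f$: given a maximizing sequence $(a_n,b_n)$, translating so that $a_n(0)$ and $b_n(0)$ stay in bounded intervals, one extracts a uniformly-on-compacts convergent subsequence via Arzel\`a-Ascoli for monotone lifts, and continuity of $\trot$ under this convergence hands back a pair realizing the supremum.

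\textbf{Symmetry.} I would introduce the involution $\phi:\tH\to \tH$ defined by $\phi(f)(t)=-f(-t)$. A direct computation shows $\phi(fg)=\phi(f)\phi(g)$ and $\phi^2=\id$, so $\phi$ is a group automorphism of $\tH$; checking on translations $T_x$ shows $\trot\circ \phi=-\trot$. Since $\phi(w(a,b))=w(\phi(a),\phi(b))$ and $\phi$ restricts to a bijection $\trot^{-1}(x)\to \trot^{-1}(-x)$, running $(a,b)$ over $\trot^{-1}(x)\times \trot^{-1}(y)$ and applying $\trot$ yields $[r_w(x,y),R_w(x,y)]=[-R_w(-x,-y),-r_w(-x,-y)]$, which is the claimed identity.

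\textbf{Algorithm for positive $w$.} The key structural observation is the following monotonicity, valid precisely because $w$ has no inverse letters: if $a\le a'$ and $b\le b'$ pointwise as lifts, then $w(a,b)\le w(a',b')$ pointwise, and therefore $\trot(w(a,b))\le \trot(w(a',b'))$. Thus $R_w(x,y)$ is realized by maximizing $a,b$ pointwise subject to the rotation-number constraint. For rational $x=p/q$, $y=r/s$ one expects the extremizers to be piecewise-linear ``staircase'' lifts whose ascending steps are positioned to advance maximally; the composed map $w(a,b)$ is then PL, so its rotation number is rational and computable by iterating on a finite orbit of the basepoint.

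\textbf{Main obstacle.} The hard part is that the extremal PL lift with $\trot(a)=p/q$ is not unique and not universal: the ``best'' staircase for $a$ depends on where the $a$-letters sit inside $w$ relative to the $b$-letters. I therefore expect the algorithm to proceed by a recursion on the combinatorial structure of $w$—for instance by grouping maximal runs of a single letter into ``super-letters'' with their own effective rotation numbers, and iterating—reducing the computation of $R_w$ to a finite combinatorial optimization over lattice-type orbits. Verifying that this combinatorial maximum genuinely equals the supremum over all continuous $a,b$ (and not merely over the PL candidates considered) is, I expect, the delicate technical step; it presumably requires a careful approximation argument that any $a,b$ can be replaced by PL staircase lifts of the prescribed form without decreasing $\trot(w(a,b))$.
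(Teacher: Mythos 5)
This theorem is quoted by the paper from Calegari--Walker \cite{Caleg} without proof, so your sketch can only be measured against the original source. Your high-level strategy does match theirs: connectedness plus a compactness/attainment argument for the interval structure, conjugation by $t\mapsto -t$ for the symmetry, and monotonicity plus ``stairstep'' extremizers for positive words. The symmetry part of your write-up is complete and correct: $\phi(f)(t)=-f(-t)$ is an automorphism of $\tH$ reversing $\trot$, and the identity $r_w(x,y)=-R_w(-x,-y)$ follows.

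The other two parts have genuine gaps. (1) Your connectedness argument rests on the claim that every $f$ with $\trot(f)=x$ is semi-conjugate to the translation $T_x$. This is false for rational $x$ (a homeomorphism with rotation number $0$ and a single fixed point is not semi-conjugate to the identity), and rational $x$ is exactly the case the algorithm concerns; connectedness of $\trot^{-1}(x)$ is still true, but needs a different argument (e.g.\ an interpolation that preserves a periodic orbit). (2) Arzel\`a--Ascoli does not apply to a sequence of monotone degree-one lifts: they are not equicontinuous and can converge to a step function. One only gets a monotone, possibly discontinuous and non-injective limit (Helly selection), and making sense of $w(a,b)$ for such limits --- in particular when $w$ contains inverse letters --- is precisely why Calegari--Walker develop their formalism of monotone maps; your argument for attainment of the endpoints therefore does not close. (3) Most importantly, the algorithmic part is not a proof but a forecast: you write that you ``expect'' a recursion on the combinatorics of $w$ and explicitly defer the key verification that the finite maximum over piecewise-linear stairstep configurations equals the supremum over all homeomorphisms. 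That verification --- Calegari--Walker's lemma that any pair $(a,b)$ may be replaced, without decreasing $\trot(w(a,b))$, by a pair whose dynamics are carried by finitely many interleaved periodic orbits, after which one enumerates the finitely many interleavings --- is the real content of the positive-word statement, so as it stands your proposal is an outline of the known proof rather than a proof.
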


This theorem implies, in particular, that the ``ziggurat'' of possible rotation numbers of the composition is described by its upper boundary, the graph of the function~$R_{ab}(x,y)$. An immediate remark is that the function $R_{ab}(x,y)-x-y$ is $\bbZ^2$-periodic, thus to understand $R_{ab}(x,y)$ it suffices to study it on the unit square $[0,1)^2$. Moreover, starting from their dynamical approach, Calegari and Walker have obtained an explicit formula for $R_{ab}$ in terms, different from those of Jankins and Neumann:

\begin{Th}[{\itshape ab}~Theorem, \mbox{\cite{Caleg}}]\label{t:ab}
\begin{equation}\label{formula}
\forall x,y \quad R_{ab}(x, y) = \sup_{\frac{p_1}{q}\leq x, \frac{p_2}{q}\leq y} \frac{p_1 + p_2 +1}{q}.
\end{equation}
\end{Th}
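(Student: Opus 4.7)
The plan is to prove the two inequalities $R_{ab}(x,y) \ge \sup$ and $R_{ab}(x,y) \le \sup$ separately. By continuity of the translation number under uniform convergence of lifts and density of rationals, it suffices to work with rational data; by monotonicity of $R_{ab}$ in $x, y$ one can reduce the lower bound to the boundary case $x = p_1/q$, $y = p_2/q$.

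For the lower bound, given any admissible triple $(p_1, p_2, q)$ with $p_1/q \le x$ and $p_2/q \le y$, I will construct explicit lifts $a, b$ realizing $\trot(ab) \ge (p_1 + p_2 + 1)/q$. In the extremal case $x = p_1/q$, $y = p_2/q$, take $a, b$ to be piecewise affine lifts of circle homeomorphisms with distinguished $q$-periodic orbits of combinatorial rotation types $p_1/q$ and $p_2/q$. Position these two orbits on $S^1$ in a carefully interleaved ``staircase'' pattern so that some point $z_0$ satisfies $(ab)^q(z_0) = z_0 + p_1 + p_2 + 1$: the $+1$ arises from exactly one extra wrap of the fundamental domain induced by the relative placement of the two orbits. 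For the general case one postcomposes $a, b$ with small monotone perturbations that raise $\trot(a)$ and $\trot(b)$ to the prescribed $x, y$ without destroying the attracting periodic orbit of $ab$, invoking the monotonicity of $\trot$ in the lift.

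For the upper bound, suppose $\trot(ab) \ge m/q$ for some rational $m/q$; then there exists $z_0$ with $(ab)^q(z_0) \ge z_0 + m$. Tracing the orbit via $z_{2i+1} = a(z_{2i})$, $z_{2i+2} = b(z_{2i+1})$ for $i = 0, \dots, q-1$, the total $a$- and $b$-advances $M_1 = \sum_i(z_{2i+1} - z_{2i})$ and $M_2 = \sum_i(z_{2i+2} - z_{2i+1})$ satisfy $M_1 + M_2 \ge m$. One must then choose integers $p_1, p_2$ with $p_1/q \le x$, $p_2/q \le y$, and $p_1 + p_2 + 1 \ge m$. The technical heart is an orbit-comparison lemma: the cyclic order of the points $z_0, z_2, \dots, z_{2q-2}$ on $S^1$ together with the monotonicity of $a$ allows one to exhibit a genuine periodic point of $a$ witnessing $\trot(a) \ge p_1/q$ for an appropriate integer $p_1 \le M_1$, and analogously for $b$. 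A delicate integer splitting then ensures the combined rounding slack between $M_1 + M_2$ and $p_1 + p_2$ never exceeds $1$, so that $p_1 + p_2 + 1 \ge m$.

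The principal obstacle, which I would attack most carefully, is precisely this orbit-comparison lemma: transferring the ``pseudo-orbit'' data of $a$ along an $ab$-orbit into a genuine $a$-orbit witness for the rotation number, and simultaneously controlling both the $a$- and $b$-rounding errors so their total contribution is at most $1$. This is exactly the dynamical step that produces the $+1$ correction in the Calegari--Walker formula and distinguishes $R_{ab}$ from the naive sum $x + y$.
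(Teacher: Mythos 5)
First, note that the paper does not prove this statement: Theorem~\ref{t:ab} is quoted verbatim from Calegari--Walker~\cite{Caleg}, so there is no in-paper proof to compare against, and your sketch has to be judged against the original dynamical argument. As a roadmap it has the right shape (split into two inequalities, explicit staircase construction for the lower bound, orbit bookkeeping for the upper bound), but as written it is a plan rather than a proof, and each of its three steps has a real gap. (i) The opening reduction ``by continuity of the translation number\dots it suffices to work with rational data'' is not valid as stated: $\trot$ is indeed continuous in the homeomorphism, but $R_{ab}$ as a function of $(x,y)$ is a step function --- that is the whole point of the ziggurat --- so you cannot pass between rational and irrational $(x,y)$ by naive density. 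The correct reduction uses monotonicity of $R_{ab}$ together with one-sided semicontinuity, and the relevant side is different for the two inequalities. (ii) In the lower bound, the $+1$ is the entire content of the formula, and you assert it (``exactly one extra wrap\dots induced by the relative placement of the two orbits'') without specifying the interleaving or verifying the count. The maps cannot be close to rigid rotations (those would only give $(p_1+p_2)/q$); one needs, say, $a$ and $b$ each carrying the prescribed periodic orbit but pushing every other point almost to the next orbit point, and then one must actually check $(ab)^q(z_0)\ge z_0+p_1+p_2+1$ for a suitable $z_0$.

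(iii) The upper bound hinges entirely on the ``orbit-comparison lemma'' that you yourself flag as the principal obstacle: extracting genuine witnesses $\trot(a)\ge p_1/q$ and $\trot(b)\ge p_2/q$ from the pseudo-orbit data $z_0,z_1,\dots$ with a \emph{combined} rounding loss of at most $1$. The naive estimate loses one unit for each of $a$ and $b$, i.e.\ up to $2$, which would give the wrong formula; showing the total loss is $\le 1$ is exactly where the combinatorial analysis of the interleaved orbit (Calegari--Walker's stairstep argument) lives, and the proposal does not contain it. Identifying the hard step correctly is to your credit, but deferring it means the theorem is not proved. If your goal is the content of the present paper rather than of~\cite{Caleg}, the efficient course is to cite the $ab$~theorem as the authors do and spend the effort on Section~\ref{s:equivalence}, where formula~\eqref{formula} is converted into the Jankins--Neumann list of parallelepipeds via Farey fractions.
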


The purpose of the present text is twofold. First, the Jankins--Neumann ziggurat clearly exhibits some fractal nature (see Fig.~\ref{f1},~\ref{ab}). We study the geometry of this ziggurat, and in particular show that it is indeed the case: that the set of its vertices is self-similar under some simple projective transformations. 

Secondly, as it was mentioned earlier, the theorem of Calegari and Walker and the Jankins--Neumann conjecture have the sets described in a different way. It is known that these two descriptions are equivalent (in particular, Calegari--Walker's Theorem~\ref{t:ab} gives an alternative proof of the Jankins--Neumann conjecture). However, we have found a very interesting proof of the passage between the two, and it seems that this way of proving the equivalence was not previously known, and is shorter than existing one. We present this passage in Sec.~\ref{s:equivalence}. Also, we discuss the corollaries of this comparison for the Calegari--Walker formula and the function~$R_{ab}$.

\section{Vertices and the self-similarity}\label{stat}

A first immediate remark is the passage between the definitions of the ziggurat by Jankins--Neumann and by Calegari--Walker: 


\begin{pro}\label{zigCW}
The triple $(x, y, z) \in [0;1)^3$ of translation numbers can be represented by $f_1, f_2, f_3 \in \tH$ such that $f_1f_2f_3=\id$ if and only if $z \in [0; R_{ab}(1 - x , 1 - y) - 1]$.
\end{pro}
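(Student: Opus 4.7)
The plan is to reduce Proposition~\ref{zigCW} to Theorem~\ref{t:w} by collapsing the three-factor condition $f_1 f_2 f_3 = \id$ into a constraint on the translation number of the two-factor composition $f_1 f_2$.

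The first step is the reduction itself: $f_1 f_2 f_3 = \id$ is equivalent to $f_1 f_2 = f_3^{-1}$, so, since $\trot$ satisfies $\trot(g^{-1}) = -\trot(g)$, the triple $(x, y, z)$ is realized exactly when there exist $f_1, f_2 \in \tH$ with $\trot(f_1) = x$, $\trot(f_2) = y$, and $\trot(f_1 f_2)$ equal to a value determined linearly by $z$. By Theorem~\ref{t:w}, this is possible if and only if that prescribed value lies in the closed interval $[r_{ab}(x, y), R_{ab}(x, y)]$, and every value in this interval is realized by some $(f_1, f_2)$.

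The second step is to rewrite the resulting bound on $z$ in the announced form $z \in [0, R_{ab}(1-x, 1-y) - 1]$. For this I use two ingredients already in the paper: the duality $r_{ab}(u, v) = -R_{ab}(-u, -v)$ from Theorem~\ref{t:w}, and the $\bbZ^2$-periodicity of $R_{ab}(u, v) - u - v$ noted immediately after Theorem~\ref{t:w}, which in particular gives $R_{ab}(-x, -y) = R_{ab}(1-x, 1-y) - 2$. Substituting these two relations transforms the bound expressed with $r_{ab}$ into one expressed only with $R_{ab}$ at the shifted point $(1-x, 1-y)$, up to an integer shift fixed by the first step.

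The final step is intersecting with the cube $[0, 1)^3$: the lower bound provided by Theorem~\ref{t:w} is automatically nonpositive when $(x, y) \in [0, 1)^2$, because the Calegari--Walker formula~\eqref{formula} gives $R_{ab}(x, y) \geq 1$ on this range (take $p_1 = p_2 = 0$, $q = 1$). Hence the effective lower bound is simply $z \geq 0$, as in the statement. I anticipate no conceptual obstacle: the proof is one reduction via Theorem~\ref{t:w} followed by bookkeeping of the integer shifts from periodicity and duality, with the only delicate point being to align them so as to land on $R_{ab}(1-x, 1-y) - 1$ rather than a neighboring variant.
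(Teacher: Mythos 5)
The paper itself offers no proof of Proposition~\ref{zigCW} (it is presented as ``a first immediate remark''), so your proposal has to be judged on its own; its skeleton --- reduce to Theorem~\ref{t:w} via $f_3=(f_1f_2)^{-1}$, then convert $r_{ab}$ to $R_{ab}$ by the duality $r_{ab}(x,y)=-R_{ab}(-x,-y)$ and the $\bbZ^2$-periodicity of $R_{ab}(u,v)-u-v$ --- is the right one. But the ``bookkeeping of integer shifts'' that you defer to the end is exactly where the argument breaks, and it cannot be fixed by alignment alone. Taking the relation literally as $f_1f_2f_3=\id$, your first step forces $\trot(f_1f_2)=-z$ with no freedom, so Theorem~\ref{t:w} gives the condition $-z\in[r_{ab}(x,y),R_{ab}(x,y)]$, whose upper bound for $z$ is $-r_{ab}(x,y)=R_{ab}(-x,-y)=R_{ab}(1-x,1-y)-2$: one less than the claimed $R_{ab}(1-x,1-y)-1$, with no remaining shift to absorb the difference. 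The mismatch is not cosmetic. For $(x,y)=(1/2,1/2)$ one has $R_{ab}(1/2,1/2)=3/2$ and hence $r_{ab}(1/2,1/2)=-R_{ab}(-1/2,-1/2)=1/2$, so $\trot(f_1f_2)\ge 1/2$ and the requirement $\trot(f_1f_2)=-z$ forces $z\le -1/2$; under the literal reading no triple $(1/2,1/2,z)$ with $z\in[0,1)$ would be realizable, contradicting both the proposition and Naimi's theorem, whose list contains the cube $[0,1/2]^3$.

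The missing idea is the normalization of the relation: in the Jankins--Neumann problem (and in Calegari--Walker's dictionary between the two descriptions) the product is required to equal the unit translation $\mathrm{sh}(1)$, the generator of the center of $\tH$, not the identity; the paper's ``$f_1f_2f_3=\id$'' must be read with this convention. With $f_1f_2f_3=\mathrm{sh}(1)$ one gets $f_1f_2=\mathrm{sh}(1)f_3^{-1}$, hence $\trot(f_1f_2)=1-z$, and the condition $1-z\in[r_{ab}(x,y),R_{ab}(x,y)]$ yields precisely $z\le 1-r_{ab}(x,y)=1+R_{ab}(-x,-y)=R_{ab}(1-x,1-y)-1$, while the lower constraint $z\ge 1-R_{ab}(x,y)$ is subsumed by $z\ge 0$ since $R_{ab}\ge 1$ on $[0,1)^2$ (your last step, which is correct). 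So your plan goes through once the relation is corrected, but as written it terminates in the wrong constant, and the ``delicate point'' you flag is a genuine gap rather than routine arithmetic.
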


In this section, we will be working with the Jankins--Neumann ziggurat, denoting it by~$\tilde{\tz}$. Naimi's Theorem~\ref{zigJN} then implies that


\begin{multline}\label{e:zigJN}
\tilde \tz = \bigcup_{\gcd(a,m) = 1} \Pi\left( \frac{a}{m},\frac{m-a}{m}, \frac{1}{m}\right) \cup \bigcup_{\gcd(a,m) = 1} \Pi\left( \frac{a}{m},\frac{1}{m}, \frac{m-a}{m}\right) \\
\cup \bigcup_{\gcd(a,m) = 1} \Pi\left( \frac{1}{m},\frac{a}{m}, \frac{m-a}{m}\right)
\end{multline}
where $\Pi(a,b,c) = [0;a]\times [0;b]\times [0;c]$. 


To state the self-similarity result, let us first introduce the notion: we call point $X$ a \emph{vertex} of a ziggurat $Z$, that is the union of parallelepipeds, if $X \in Z$ and there is no $\Pi(a,b,c) \subset Z$ such that $X \in \Pi(a,b,c)\setminus \{(a,b,c)\}$. In terms of $R_{ab}$, a \emph{vertex} is a point $(x,y,R_{ab}(x,y))\in [0,1)^3$, such that for any $x'\le x$ and $y'\le y$ inequality $x'+y'<x+y$ implies $R(x',y')<R(x,y)$.

Now, we can list the vertices of the Jankins--Neumann ziggurat: a direct corollary of the Naimi's theorem is 
\begin{pro}\label{p:list}
The vertices of the Jankins--Neumann ziggurat are points of the three families, that differ by the permutation of the coordinates:
\begin{itemize}
\item $\{(\frac{a}{m},\frac{m-a}{m},\frac{1}{m})\}$, with coprime $m>a>0$;
\item $\{(\frac{m-a}{m},\frac{1}{m},\frac{a}{m})\}$, with coprime $m>a>0$;
\item $\{(\frac{1}{m},\frac{m-a}{m},\frac{a}{m})\}$, with coprime $m>a>0$.
\end{itemize}
These three families lie respectively on the planes $x+y=1$, $x+z=1$ and $y+z=1$. Moreover, they lie respectively inside the triangles $ABD$, $ACD$ and $BCD$, where $A$, $B$ and $C$ are respectively the points at unit distance on the axes $Ox$, $Oy$ and $Oz$, and $D=(1/2,1/2,1/2)$ is the only common point of all the three families. 
\end{pro}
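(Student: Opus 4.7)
The plan is to use Naimi's Theorem~\ref{zigJN} to identify the candidate vertices as the top corners of the Naimi parallelepipeds in~(\ref{e:zigJN}), and then rule out componentwise domination among these corners via a coprimeness argument. By the vertex definition, any vertex must be such a corner, and conversely a corner $X$ is a vertex iff no other cover corner $X'$ satisfies $X' \ge X$ coordinatewise with $X' \ne X$. So the task reduces to excluding such $X'$ for each of the three listed families.

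The key observation is that every cover corner $X = (x_1, x_2, x_3)$ has coordinate sum $x_1 + x_2 + x_3 = 1 + \tfrac{1}{m}$, where $m$ is its denominator. Consequently, $X' \ge X$ with $X' \ne X$ forces $m' < m$. Noting that the cover~(\ref{e:zigJN}) is invariant under all permutations of coordinates (cyclic permutations cycle the three unions, while transpositions preserve each union modulo the involution $a \leftrightarrow m-a$), we may WLOG fix $X = \frac{1}{m}(a, m-a, 1)$ in the first family and let $X' = \frac{1}{m'}(q_1, q_2, q_3)$ range over all cover corners, setting $r_i := m q_i - m' p_i \ge 0$ (where $(p_1, p_2, p_3) = (a, m-a, 1)$), so that $\sum_i r_i = m - m' > 0$.

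The proof then splits into three subcases according to the position of the entry $1/m'$ in $X'$. In each subcase one identifies a pair $r_i + r_j$ that is nonpositive by virtue of $a' \le m' - 1$; for instance, when $X' = \frac{1}{m'}(1, a', m'-a')$, one computes $r_1 + r_2 = m(a' + 1 - m') \le 0$. This forces $r_i = r_j = 0$, and the vanishing yields an equation such as $m = m' a$ or $m a' = m' a$, which combined with $\gcd(a, m) = 1$ forces either $m \mid m'$, or $a = 1$ with $m = m'$ -- each contradicting $m' < m$. This completes the identification of the vertices with the three families.

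The remaining geometric claims are direct. The plane identities $x+y=1$, $x+z=1$, $y+z=1$ follow from $\tfrac{a}{m} + \tfrac{m-a}{m} = 1$ applied to each family. For the triangles, writing any point of $ABD$ in barycentric form $P = \alpha A + \beta B + \gamma D$ gives $P = (\alpha + \gamma/2, \beta + \gamma/2, \gamma/2)$; a family-1 point $(a/m, (m-a)/m, 1/m)$ is realized with $\gamma = 2/m$, $\alpha = (a-1)/m$, $\beta = (m-a-1)/m$, all nonnegative since $1 \le a \le m-1$. The analogous check places families 2 and 3 in $ACD$ and $BCD$. Finally, intersecting the three planes $x+y = x+z = y+z = 1$ yields the unique solution $D = (1/2, 1/2, 1/2)$, which lies in all three families (taking $a = 1$, $m = 2$). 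The main obstacle is the case analysis in the third paragraph: identifying the correct pair $r_i + r_j$ in each subcase is the nontrivial step, but once the pattern is spotted, each contradiction is short.
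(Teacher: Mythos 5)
Your proof is correct and takes essentially the same route as the paper: both deduce the candidate list from Naimi's theorem, reduce vertexhood to non-domination among the corners of the good parallelepipeds, and kill domination by a coordinate-sum observation combined with coprimality (the paper phrases this as ``any two coordinates of a listed corner sum to at most $1$, while one pair sums to exactly $1$,'' which is precisely what your inequalities $r_i+r_j\le 0$ encode). Your case analysis and barycentric verification of the triangle claims are sound, just somewhat more computational than the paper's uniform argument.
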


\begin{rem}\label{r:min}
Formally speaking, to ensure that all the points in the above list are the indeed the vertices, one should check that neither of the parallelepipeds listed in the Jankins--Neumann conjecture is contained in any other. Though, such a check is almost immediate (and we will do it in Sec.~\ref{s:equivalence}).
\end{rem}

\begin{cor}\label{c:list}
Translating Proposition~\ref{p:list} on the language of $R_{ab}$, we get for its (stepped) graph the families of vertices
\begin{itemize}
\item $\{(\frac{m-a}{m},\frac{a}{m},1+\frac{1}{m})\}$ with coprime $m>a>0$;
\item $\{(\frac{a}{m},1-\frac{1}{m},1+\frac{a}{m})\}$ with coprime $m>a>0$;
\item $\{(1-\frac{1}{m},\frac{a}{m},1+\frac{a}{m})\}$ with coprime $m>a>0$.
\end{itemize}
\end{cor}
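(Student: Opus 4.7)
The plan is to apply Proposition~\ref{zigCW} to transfer the vertex list of Proposition~\ref{p:list} into the language of~$R_{ab}$. That proposition says that $(x,y,z)\in\tilde{\tz}$ if and only if $z+1\le R_{ab}(1-x, 1-y)$, so the affine involution $\Phi:(x,y,z)\mapsto(1-x,\, 1-y,\, 1+z)$ identifies the ziggurat with the subgraph $\{(X,Y,Z):Z\le R_{ab}(X,Y)\}$. A maximal corner of $\tilde{\tz}$ in the componentwise order is thus carried by $\Phi$ to a point $(X_0, Y_0, R_{ab}(X_0,Y_0))$ lying on the graph of~$R_{ab}$. Using that $R_{ab}$ is monotone non-decreasing in each argument (which is immediate from Theorem~\ref{t:ab}), one then checks that $\Phi$ sets up a bijection between the two notions of vertex introduced in the section: maximality in $\tilde{\tz}$ under the componentwise order corresponds precisely to the ``strict drop'' condition on $R_{ab}$ when moving strictly southwest in~$(X,Y)$.

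Once this identification is in place, the proof reduces to applying $\Phi$ to each of the three families in Proposition~\ref{p:list}. A direct computation gives
\[
\Phi\!\left(\tfrac{a}{m},\tfrac{m-a}{m},\tfrac{1}{m}\right) = \left(\tfrac{m-a}{m},\tfrac{a}{m},1+\tfrac{1}{m}\right),
\]
\[
\Phi\!\left(\tfrac{m-a}{m},\tfrac{1}{m},\tfrac{a}{m}\right) = \left(\tfrac{a}{m},1-\tfrac{1}{m},1+\tfrac{a}{m}\right),
\]
\[
\Phi\!\left(\tfrac{1}{m},\tfrac{m-a}{m},\tfrac{a}{m}\right) = \left(1-\tfrac{1}{m},\tfrac{a}{m},1+\tfrac{a}{m}\right),
\]
which are exactly the three families appearing in the statement. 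I expect no real obstacle here beyond bookkeeping; the single point requiring care is the equivalence of the two vertex notions under $\Phi$, and that is a short formal consequence of the monotonicity of~$R_{ab}$.
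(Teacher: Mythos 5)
Your proposal is correct and matches the paper's intent exactly: the paper treats Corollary~\ref{c:list} as an immediate translation of Proposition~\ref{p:list} via the change of coordinates $(x,y,z)\mapsto(1-x,1-y,1+z)$ furnished by Proposition~\ref{zigCW}, and your three computations reproduce the stated families. The only extra content you add is the explicit check that the two notions of vertex correspond under this map, which is a reasonable (and correct) piece of bookkeeping the paper leaves implicit.
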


Our {first} result, Theorem~\ref{ss} below, states that these points form a self-similar set. But before stating it, we would like to have additional geometric intuition on that set of vertices. Namely, at first glance it seems natural to decompose the ziggurat on Fig.~\ref{f1} into three parts ``near $A$'', ``near $B$'', ``near $C$''. However, as Proposition~\ref{p:list} shows, it is much more important to decompose the vertices into the three families listed in this proposition: the vertices that are on the triangles $ABD$, $ACD$ and $BCD$ respectively. 

{Also, we would like to deduce conclusions for the function $R_{ab}$. Thus it is interesting to consider the projection of the ziggurat on the $xy$ plane (marking the level surfaces and discontinuity lines): see Fig.~\ref{proj}. Marking only the vertices on this projection, we get Fig.~\ref{vert}.} Such a projection sends the vertices that correspond to the first family on the line $x+y=1$, and the second and the third families become separated by the diagonal $x=y$: the second comes below the diagonal, while the third one comes above. 


\begin{figure}[h] \begin{multicols}{2}
\hfill \includegraphics[width=70mm]{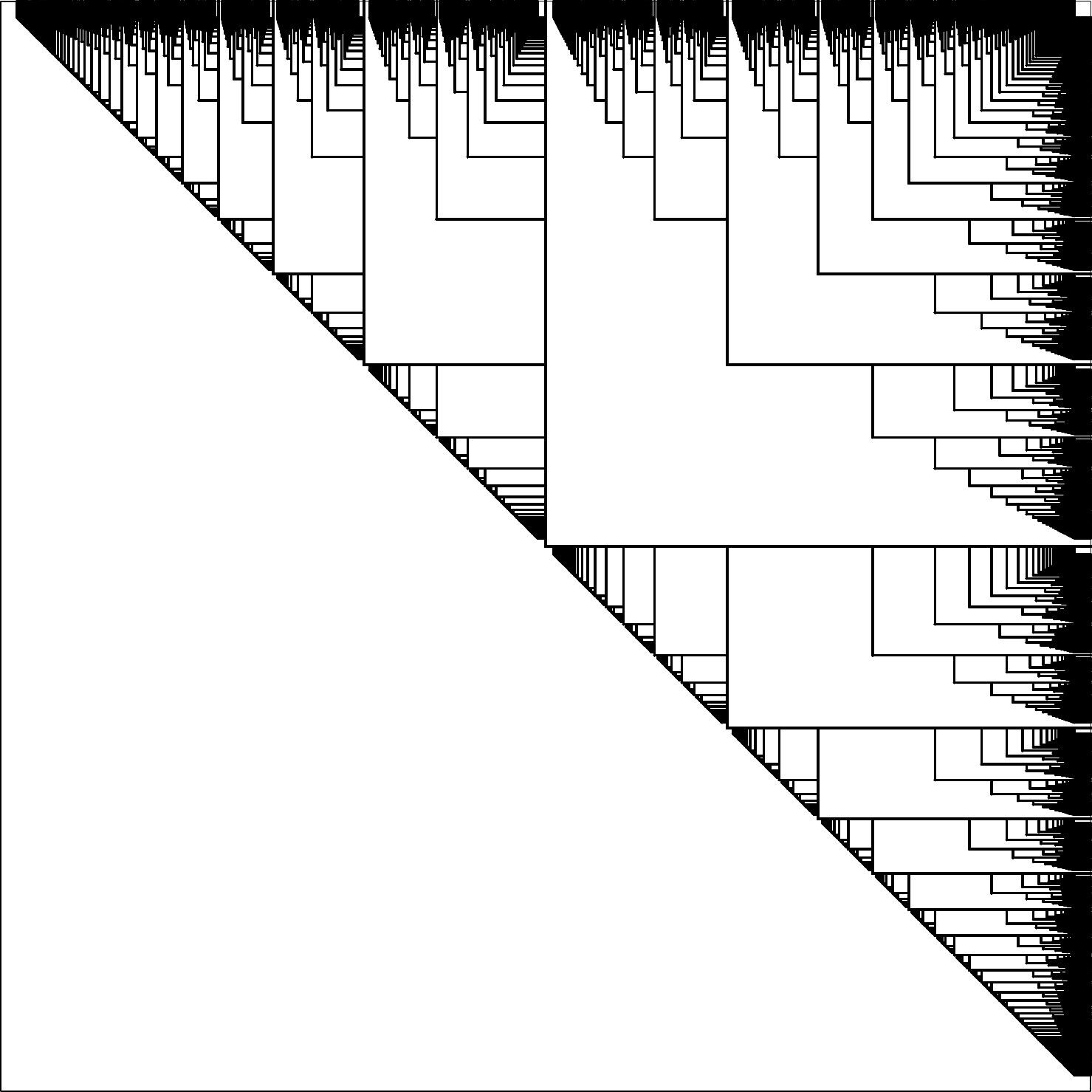} \hfill \caption{Graph of $R_{ab}$, seen from the top} \label{proj} \hfill \includegraphics[width=70mm]{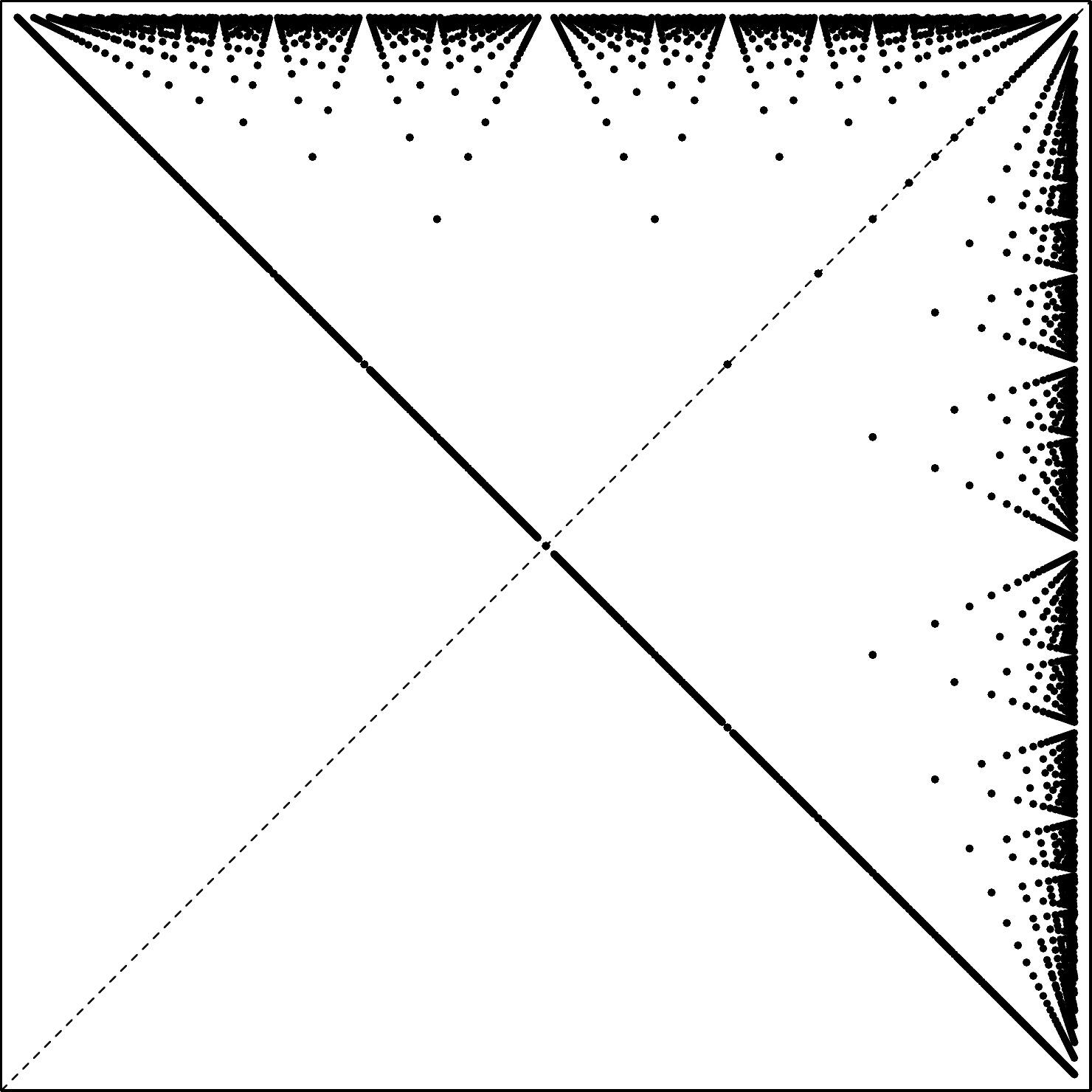} \hfill \caption{Projection of its vertices on the $xy$-plane, with the axis of symmetry marked} \label{vert}
\end{multicols} \end{figure}

Now, let $\Delta$ be the set that comes from the projection of vertices from the third family. As we have claimed before, this set is then self-similar:

\begin{figure}
\noindent\centering{
 \includegraphics[width=70mm]{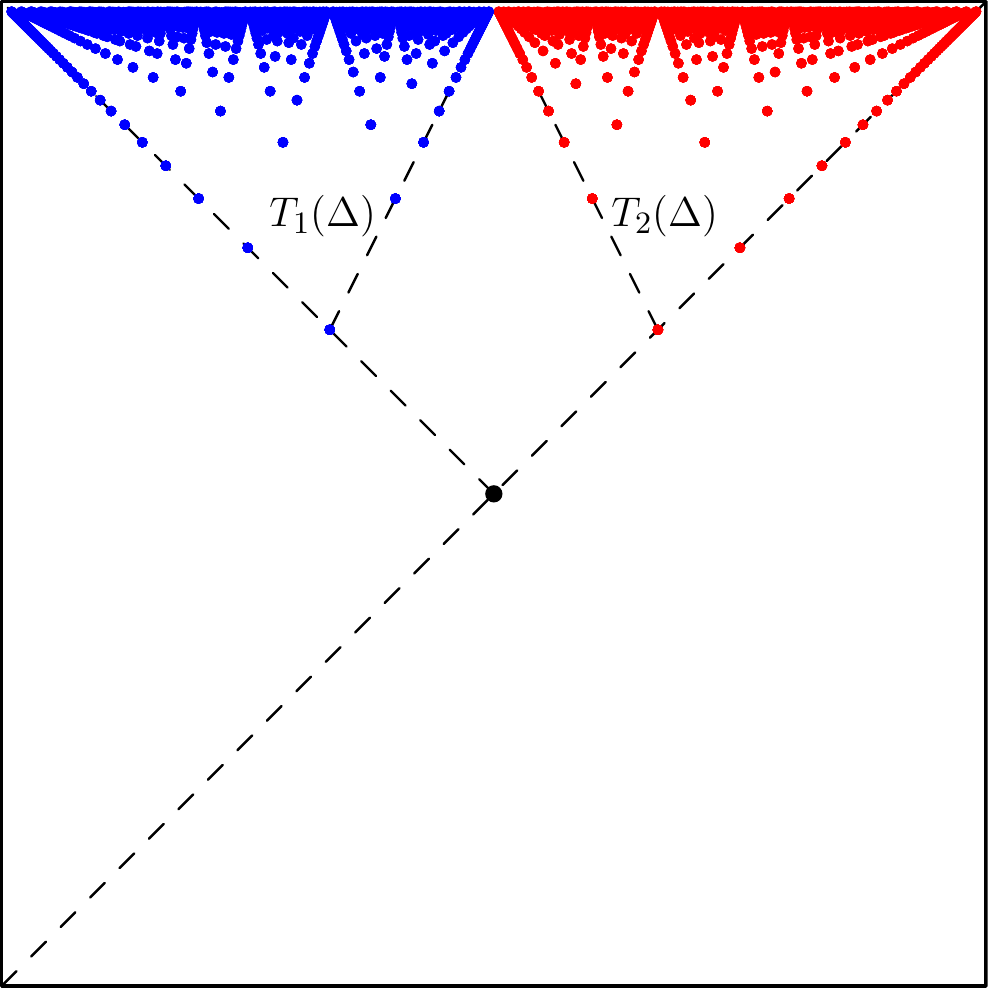} \hfill \caption{The set $\Delta$ and its self-similarity} \label{map}}
\end{figure}

\begin{Th}[Self-similarity]\label{ss}
The set $\Delta$ is self-similar with respect to two projective transformations 
$$
T_1(x,y)=\left( \frac{x}{1+x}, \frac{x+y}{1+x}\right), \quad T_2(x,y)=\left( \frac{1}{2-x}, \frac{1+y-x}{2-x}\right),
$$
namely, one has
$$
\Delta=T_1(\Delta) \sqcup T_2(\Delta) \sqcup \{\left(1/2,1/2\right)\}.
$$
\end{Th}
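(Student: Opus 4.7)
The plan is to translate the geometric self-similarity into an arithmetic statement about reduced fractions. By Corollary~\ref{c:list}, the set $\Delta$ is in bijection with the family of coprime pairs $\{(a,m) : 0 < a < m, \gcd(a,m) = 1\}$, the generic point of $\Delta$ taking the form associated with such a pair and the seed $(1/2,1/2)$ corresponding to $(a,m) = (1,2)$. The first step is to rewrite $T_1$ and $T_2$ in this parametrization: a direct substitution should turn them into the elementary ``Farey-like'' transformations $(a,m) \mapsto (a, m+a)$ and $(a,m) \mapsto (m, 2m-a)$. Both preserve coprimality via the identities $\gcd(a, m+a) = \gcd(a, m) = \gcd(m, 2m-a)$, and both preserve the inequalities $0 < a' < m'$.

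With this dictionary in hand, the decomposition follows from two elementary observations. First, comparing first coordinates shows that $T_1(\Delta)$ lies in $\{x < 1/2\}$ and $T_2(\Delta)$ in $\{x > 1/2\}$ --- each being a one-line inequality ($a/(m+a) < 1/2 \iff a < m$, respectively $m/(2m-a) > 1/2 \iff a > 0$) --- while the seed $(1/2, 1/2)$ is the unique point with $x = 1/2$. This yields pairwise disjointness of $T_1(\Delta)$, $T_2(\Delta)$, and $\{(1/2, 1/2)\}$. Second, for any point of $\Delta \setminus \{(1/2, 1/2)\}$, the half-plane it lies in dictates whether to invert $T_1$ or $T_2$, and the explicit inverses $T_1^{-1}:(a,m) \mapsto (a, m-a)$, $T_2^{-1}:(a,m) \mapsto (2a-m, a)$ produce a preimage in $\Delta$, strictly decreasing the denominator $m$. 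Iteration therefore terminates at $(1,2)$, establishing surjectivity.

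The main obstacle is entirely in the first step: the bookkeeping that identifies the projective maps $T_1$ and $T_2$ with the claimed arithmetic operations on coprime pairs, and the verification that these operations preserve both the coprime and the positivity conditions in both directions. Once the translation is made, the disjointness reduces to a one-line inequality and the surjectivity becomes the standard observation that the binary tree generated by $(a,m) \mapsto (a, m+a)$ and $(a,m) \mapsto (m, 2m-a)$ rooted at $(1, 2)$ produces every reduced fraction $a/m \in (0, 1)$ --- essentially the Stern--Brocot tree on this interval.
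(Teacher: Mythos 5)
Your proposal is correct and follows essentially the same route as the paper: both conjugate $T_1,T_2$ to elementary arithmetic operations on coprime pairs and then observe that the decomposition is the Euclidean/Stern--Brocot descent rooted at the pair corresponding to $(1/2,1/2)$. The only cosmetic difference is that you parametrize a vertex $\left(\frac{a}{m},\frac{m-1}{m}\right)$ directly by $(a,m)$, whereas the paper first applies the projective map $Q(x,y)=\left(\frac{x}{1-y},\frac{1-x}{1-y}\right)$ sending $\Delta$ to the coprime lattice points $(a,m-a)\in\bbN^2$, under which $T_1,T_2$ become $(u,v)\mapsto(u,u+v)$ and $(u,v)\mapsto(u+v,v)$; your verification of disjointness and of the terminating descent is in fact more detailed than the paper's one-line appeal to ``Euclid's algorithm.''
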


\begin{rem}\label{r:plane}
{As all the vertices of the third family lie on the plane $y+z=1$ of the triangle $ACD$, their projection to the $xy$ plane is an affine transformation. Hence, the set of the vertices of the third family is also self-similar under projective transformation that are lifts of $T_1$ and of $T_2$ on this plane. }
\end{rem}

\begin{rem}
{Note that the set $\Delta$ is symmetric with respect to the line $x=1/2$. It is an immediate observation if one considers the Jankins--Neumann ziggurat, coming from its full symmetry under the permutation of the coordinates. Though, it is much less clear if one studies the level curves of the function~$R_{ab}$: for any vertex and its symmetric image the projection of the ziggurat on the $xy$ plane collapses one of the sides of the corresponding parallelepiped, but the collapsed sides are different. This is why this symmetry does not appear on Fig.~\ref{proj}.}
\end{rem}

{Strangely enough, we did not find any direct way of establishing Theorem~\ref{ss} in the dynamical terms, that is, starting with the explicit formula~\eqref{formula} and without at first deducing the full list of the vertices (in a way it is done in Sec.~\ref{s:equivalence}). That is, one could imagine that having a vertex $(x,y)=(\frac{p_1}{q},\frac{p_2}{q})$ and considering its $T_1$ or $T_2$-image, formula~\eqref{formula} would allow to show that this image is also a vertex. Unfortunately, without stating the full list of vertices first such an argument does not seem to work. 
Instead, we obtain Theorem~\ref{ss} as a corollary of the Jankins--Neumann original description of the ziggurat (and the associated list of vertices from Proposition~1).}

Another conclusion for the function $R_{ab}$, following from the Jankins--Neumann description is the following
\begin{pro}\label{c:rational}
The function $R_{ab}$ takes only rational values; in formula~\eqref{formula}, for any $x$ and $y$ the supremum is a maximum. 
\end{pro}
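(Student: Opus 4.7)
The plan is to deduce the proposition from the Jankins--Neumann description~\eqref{e:zigJN}, via the equivalence given by Proposition~\ref{zigCW}, which yields
$$
R_{ab}(x,y) - 1 \;=\; \sup\{z \ge 0 : (1-x,\,1-y,\,z) \in \tilde{\tz}\}.
$$
Thus it suffices to show that this supremum is always attained at a rational value, and then to translate the realizing parallelepiped back into an explicit triple $(p_1,p_2,q)$ realizing the supremum in~\eqref{formula}.

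Setting $(x_0,y_0) := (1-x,\, 1-y)$, I would examine, for each of the three families of parallelepipeds in~\eqref{e:zigJN}, which ones contain the vertical line $\{(x_0,y_0)\} \times [0,1)$ in their base, and find the largest admissible $z$-extent. For family~1, with parallelepiped $[0,a/m] \times [0,(m-a)/m] \times [0,1/m]$, the condition reads $m x_0 \le a \le m(1-y_0)$ with $\gcd(a,m)=1$, and the $z$-extent $1/m$ is maximized at the \emph{smallest} valid $m$, which exists whenever the set of valid $m$'s is nonempty. For families~2 and~3, the constraint $m y_0 \le 1$ (respectively $m x_0 \le 1$) bounds $m$ from above, so only finitely many denominators are to be considered and the maximum is again attained. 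Hence the overall supremum is attained at a rational value of the form $1/m$ or $(m-a)/m$, proving that $R_{ab}(x,y)$ is rational.

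To conclude that the supremum in~\eqref{formula} is itself a maximum, I would exhibit an explicit triple $(p_1,p_2,q)$ realizing it from the data of the optimal parallelepiped. For instance, if the optimum comes from family~1 with parameters $(a,m)$, then setting $(p_1,p_2,q) = (m-a,\,a,\,m)$ gives $p_1/q = (m-a)/m \le x$ and $p_2/q = a/m \le y$ (from the constraints $a \ge m(1-x)$ and $a \le my$), while $(p_1+p_2+1)/q = (m+1)/m = 1 + 1/m = R_{ab}(x,y)$; analogous explicit triples work for the other two families. The main technical point will be handling the boundary of the unit square --- checking that at least one family always contributes (a lower bound is guaranteed, e.g., by the cube $[0,1/2]^3 \subset \tilde{\tz}$ coming from $a=1$, $m=2$) and dealing with degeneracies when $x$ or $y$ approaches $0$ or $1$; these verifications, however, should be routine.
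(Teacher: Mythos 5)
Your route is the one the paper intends (the paper states Proposition~\ref{c:rational} as ``following from the Jankins--Neumann description'' and gives no further proof), and most of your outline is sound: for family~1 the smallest admissible $m$ exists by well-ordering, for families~2 and~3 the denominator is bounded once $1-x,1-y>0$, and your back-translation of the optimal parallelepiped into a triple $(p_1,p_2,q)$ realizing the supremum in~\eqref{formula} checks out.

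There is, however, one concrete step that fails as written: the claim that ``at least one family always contributes,'' justified by the cube $[0,1/2]^3$. That cube only helps when $1-x$ and $1-y$ are both $\le 1/2$. In fact no family contributes whenever $x+y<1$: every base rectangle in~\eqref{e:zigJN} has sides summing to at most $1$, whereas the point $(1-x,1-y)$ then has coordinate sum $(1-x)+(1-y)>1$, so it lies in no base (take $(x,y)=(0.1,0.1)$, i.e.\ $(1-x,1-y)=(0.9,0.9)$). Your supremum over $\tilde\tz$ would then be over the empty set, while $R_{ab}(0.1,0.1)=1$. The fix is cheap but must be stated: when $x+y<1$ one has $\lfloor qx\rfloor+\lfloor qy\rfloor<q$ for every $q$, so $R_{ab}(x,y)=1$ with the maximum attained at $q=1$, $p_1=p_2=0$; the parallelepiped analysis is only needed when $R_{ab}(x,y)>1$, and there the extremal point $(1-x,1-y,\,R_{ab}(x,y)-1)$ does lie in some $\Pi(A,B,C)$ of the list. (This also exposes that the union~\eqref{e:zigJN}, taken literally, omits the degenerate slab $z=0$; the same care is needed on the boundary $x\in\{0,1\}$ or $y\in\{0,1\}$, which you correctly flagged.) A small simplification for the rationality half: once $(1-x,1-y,z^*)$ with $z^*=R_{ab}(x,y)-1>0$ is known to lie in some $\Pi(A,B,C)\subseteq\tilde\tz$, the two inequalities $z^*\le C$ and $C\le z^*$ (the latter because $(1-x,1-y,C)\in\tilde\tz$) force $z^*=C\in\bbQ$, with no need to compare the three families; the family-by-family analysis is still needed, as you do it, to produce the explicit maximizing triple in~\eqref{formula}.
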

This statement generalizes the Rationality Theorem of Calegari and Walker~\cite[Theorem 3.2]{Caleg}, giving the same rationality conclusion for the rational points~$(x,y)$.


It is also an interesting remark (though \emph{a posteriori} almost immediate to prove) that the projections of the vertices are indeed aligned along the lines that are ``visible'' on Fig.~\ref{vert}. The following theorem formalizes this statement; to state it, consider two families of lines. Namely, the ``green'' family of lines passing through the point $(0,1)$ and having slopes $(-1/m)$, $m=1,2,\dots$, and the ``red'' family of lines passing through the point $(1,1)$ and having slopes $1/k$, $k=1,2,\dots$. 

It is easy to check that the lines from these families are given by equations $y=\frac{2m-1}{m}-\frac{x}{m}$ and $y=\frac{k-1}{k}+\frac{x}{k}$ respectively. Let $\Delta'$ be the set of intersection points of lines of green family with lines of red family. Then, we have the following theorem, illustrated by~Fig.~\ref{f:lines}.
\begin{Th}[Alternative vertex set description]\label{lines}
$\Delta$ is the part of $\Delta'$ formed by the points with the least possible ordinate for given abscise:
$$
\Delta=\{(x,y)\in \Delta' \mid \forall y'<y \quad (x,y')\notin \Delta'\}.
$$
\end{Th}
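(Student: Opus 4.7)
The plan is to verify Theorem~\ref{lines} by a direct calculation: parametrize~$\Delta'$ explicitly, rewrite~$\Delta$ in the same variables, and check the minimality property by a short coprimality argument. First, I would solve the $2\times 2$ linear system defining the intersection of the $m$-th green line with the $k$-th red line. Using the geometric description of the two families (the green passing through $(0,1)$ with slope $-1/m$ and the red through $(1,1)$ with slope $1/k$), one obtains directly the intersection point $\bigl(m/(m+k),\,(m+k-1)/(m+k)\bigr)$. Reparametrizing by $r=m+k$ and $s=m$, the set $\Delta'$ rewrites as
\[
\Delta'=\bigl\{(s/r,\,(r-1)/r):1\le s<r\bigr\},
\]
with no coprimality restriction imposed on $(s,r)$ and with the ordinate depending only on~$r$.

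Next, I would rewrite $\Delta$ in matching form. After projection onto the $xy$-plane and an immediate renaming of the parameters in the third-family formula of Proposition~\ref{p:list}, one gets $\Delta=\{(p/q,(q-1)/q):\gcd(p,q)=1,\,0<p<q\}$. The inclusion $\Delta\subseteq\Delta'$ is then transparent: the vertex $(p/q,(q-1)/q)$ is precisely the intersection of the $p$-th green line with the $(q-p)$-th red line.

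Finally, I would verify the minimality in~$y$. Fix an abscissa $x=p/q$ in lowest terms; any $(x,y)\in\Delta'$ arises from a pair $(s,r)$ with $s/r=p/q$, and coprimality of $p,q$ forces $(s,r)=(pt,qt)$ for some positive integer~$t$, so that $y=(qt-1)/(qt)$. This expression is strictly increasing in~$t$, so the minimum over $(x,y)\in\Delta'$ at abscissa $p/q$ is uniquely attained at $t=1$ and equals $(q-1)/q$, which is the vertex $(p/q,(q-1)/q)\in\Delta$. Every $t\ge 2$ yields a non-reduced fraction $pt/qt$ and the corresponding point is not in~$\Delta$, so both inclusions of Theorem~\ref{lines} follow.

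I do not foresee any real obstacle: the whole argument reduces to a short linear-algebra computation followed by standard $\gcd$ bookkeeping. The only point requiring minor care is the identification of the projected third family with the clean form $\{(p/q,(q-1)/q):\gcd(p,q)=1\}$, which is an immediate consequence of the formula in Proposition~\ref{p:list}.
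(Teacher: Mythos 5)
Your proof is correct and complete, and it is in substance the same argument as the paper's, only carried out directly in the original coordinates. The paper first applies the projective map $Q(x,y)=\left(\frac{x}{1-y},\frac{1-x}{1-y}\right)$, under which the green and red families become the lines $x=m$ and $y=k$, so that $\Delta'$ becomes $\mathbb{N}^2$, $\Delta$ becomes the set of coprime lattice points, and the ``least possible ordinate'' condition becomes visibility from the origin; your parametrization $\Delta'=\{(s/r,(r-1)/r):1\le s<r\}$ together with the $(s,r)=(pt,qt)$ bookkeeping at a fixed reduced abscissa $p/q$ is exactly this coprimality argument without the change of variables. What the transformation buys the paper is not Theorem~\ref{lines} itself but the companion Theorem~\ref{ss}: in the new coordinates $T_1$ and $T_2$ become $(a,b)\mapsto(a,a+b)$ and $(a,b)\mapsto(a+b,b)$, so the self-similarity is Euclid's algorithm --- your direct computation does not address that, but it was not asked to. Two minor points in your favour: your intersection formula $\bigl(m/(m+k),(m+k-1)/(m+k)\bigr)$ silently corrects the paper's displayed equation $y=\frac{2m-1}{m}-\frac{x}{m}$ for the green family, which does not in fact pass through $(0,1)$ (the verbal description, which you used, is the one consistent with the rest of the argument); and your identification $\Delta=\{(p/q,(q-1)/q):\gcd(p,q)=1,\,0<p<q\}$ is precisely the set the paper's own proof works with, even though the paper's labelling of the ``second'' versus ``third'' families in Proposition~\ref{p:list} and Corollary~\ref{c:list} is not fully consistent, so the ``minor care'' you flag at that step is indeed warranted.
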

\begin{rem}
Again, as in Remark~\ref{r:plane}, the construction of this theorem can be lifted on the $ACD$ plane containing the third family of vertices. Making such a lift, one notices that the vertical line starting from an intersection point corresponds to a vertical edge of the parallelepiped, starting from the corresponding vertex. The ``least possible ordinate'' rule then corresponds to the fact that the intersection points with non-least ordinate lift to the points that belong to the corresponding edge, and thus that are not vertices. 
\end{rem}

\begin{figure}
 \noindent\centering{
 \includegraphics[width=120mm]{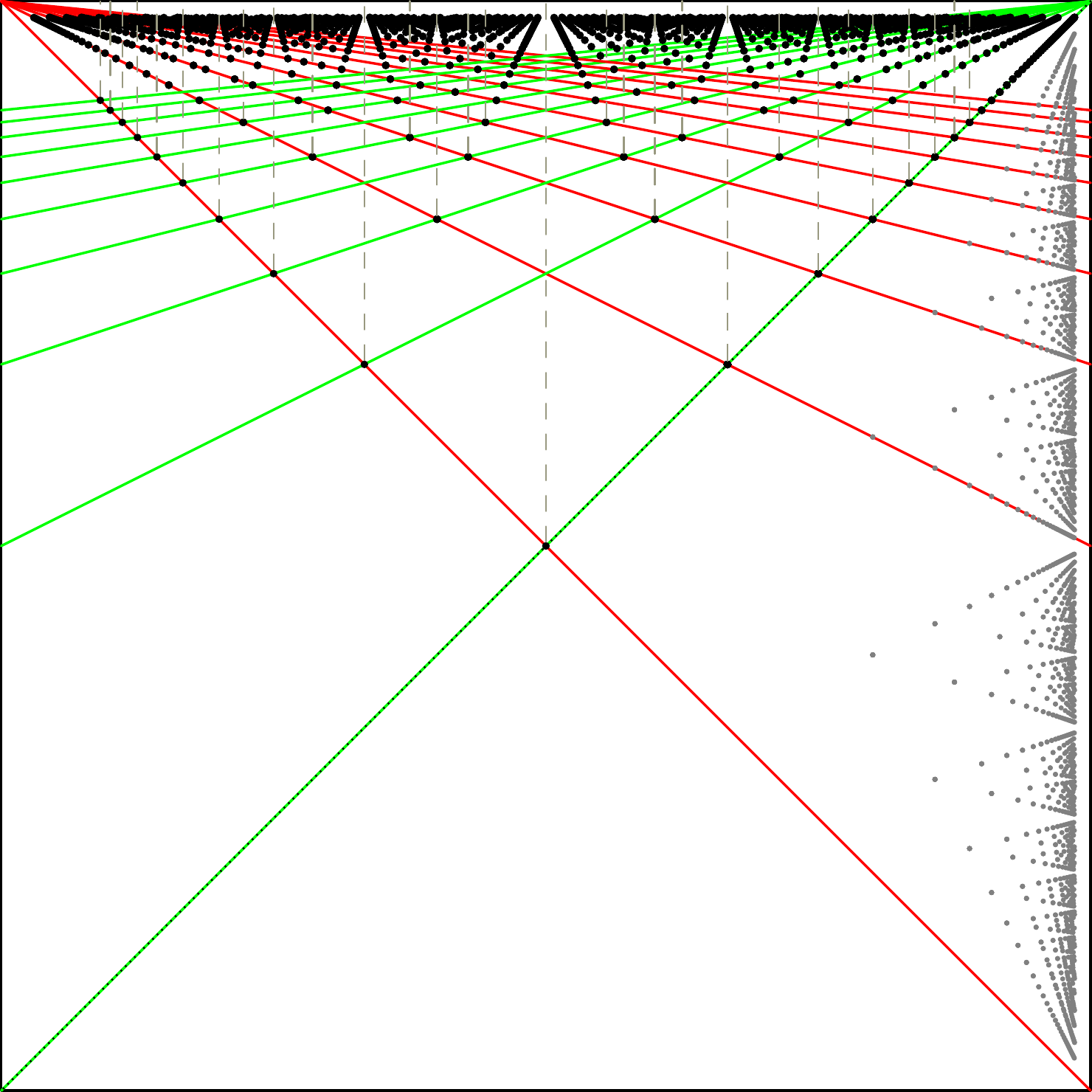} }
\caption{Lines listed in Theorem~\ref{lines}. Green and red families pass through the points (0,1) and (1,1) respectively. Bold black points correspond to the points of $\Delta$, the points of $X$ between diagonals $x=y$ and $x+y=1$ are marked by smaller grey points to illustrate additional aligning. Dashed vertical lines, starting in the points of~$\Delta$, illustrate the ``least possible ordinate'' condition.}
\label{f:lines} \end{figure}

\begin{figure}
 \noindent\centering{
 \includegraphics[width=120mm]{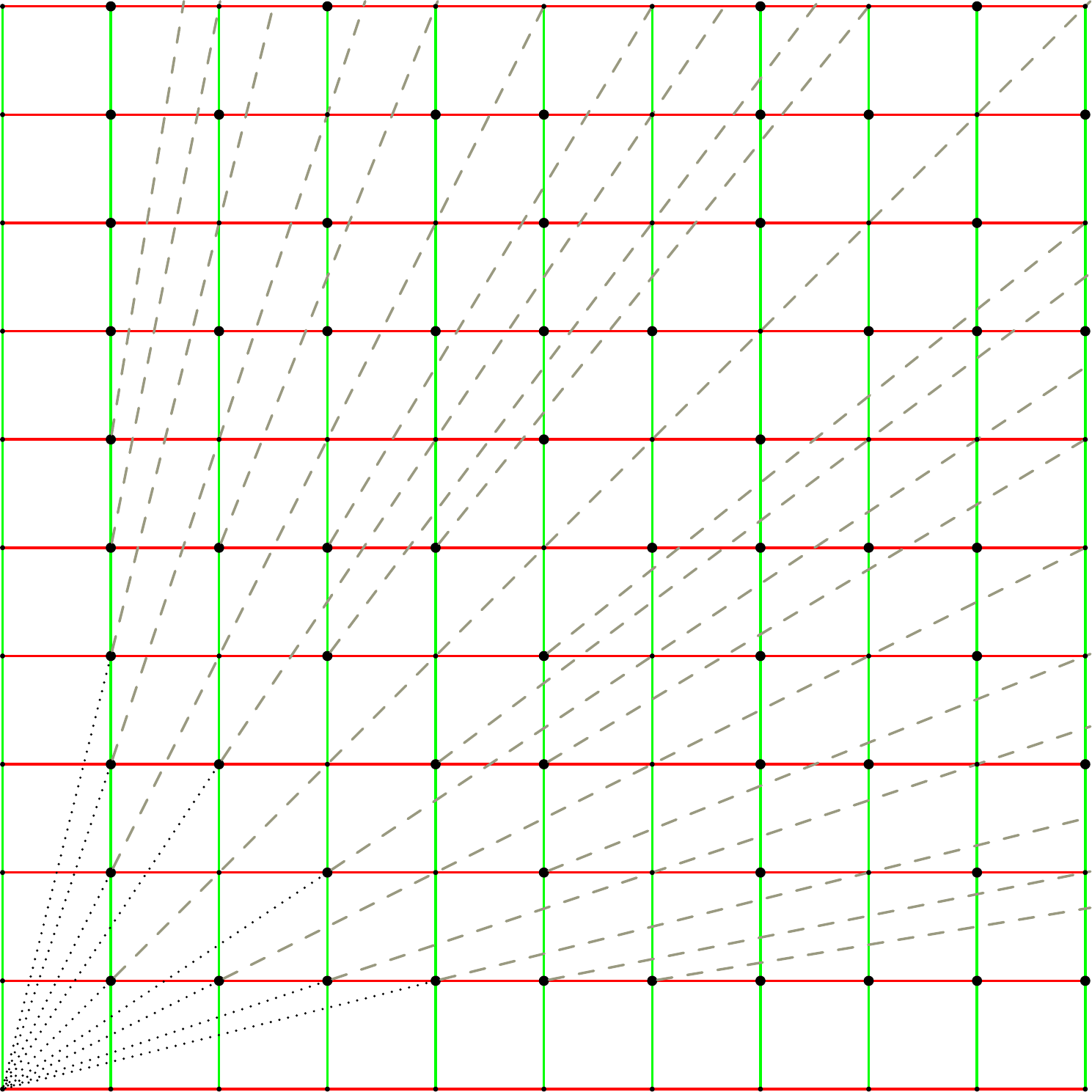} }
\caption{Fig. \ref{f:lines} after projective transformation $(x,y) \mapsto \left(\frac{x}{1 - y}, \frac{1-x}{1-y}\right)$. Some of dashed lines are not drawn, the extensions of some dashed lines are drawn dotted to illustrate passing through the origin.}
\label{f:strtn} \end{figure}

\begin{proof}[Proof of Theorems \ref{ss} and \ref{lines}]
First 
apply a projective transformation $Q$ that sends $(0,1)=[0:1:1]$ and $(1,1)=[1:1:1]$ to the points at infinity $V=[0:1:0]$ and $H=[1:0:0]$, corresponding to the  vertical and horizontal directions respectively, and that sends $V$ to the origin. It is easy to see that it is given by the formula 
$$Q: (x,y) \mapsto \left(\frac{x}{1 - y}, \frac{1-x}{1-y}\right).$$
or, equivalently, by
$$
Q:[x:y:t]\mapsto [x:t-x:t-y].
$$ 
The result of its application is shown on Fig.~\ref{f:strtn} (the reader perhaps will find this figure even more convincing than the formal arguments below).

It is easy to see that it sends a vertex $\left(\frac{r}{q}, \frac{q-1}{q} \right)$ (due to Proposition~\ref{p:list} and its Cor.~\ref{c:list}) the set $\Delta$ is formed by such points with coprime $0<r<q$) to the point $(r, q-r)$, and hence $Q(\Delta)$ is exactly the subset of $\bbN^2$ formed by points with coprime coordinates.

Next, an immediate check gives that the image of the line at infinity is the line $x=y$, the point $[a:1:0]$ (corresponding to the slope~$1/a$) being sent to the point~$(a,-a)$. Thus, the green family (as these are lines passing through $(0,1)$ and having slope $(-1/m)$) becomes the family of the vertical lines $x=m$, while the red family (as these are lines passing through $(0,1)$ and having slope~$(1/k)$) becomes the family of horizontal lines $y=k$. 

Hence the set $\Delta'$ is sent exactly to~$\bbN^2$. Finally, as the point $V$ is sent to the origin, the ``least possible ordinate'' condition  after the transformation~$Q$ becomes exactly the coprimality condition; this concludes the proof of Theorem~\ref{lines}.


To prove Theorem~\ref{ss}, note that in new coordinates the transformations $T_1$ and $T_2$ take form 
$$
\hat T_1(a,b) = (a, a+b); \, \text{ and } \,
\hat  T_2(a,b) = (a+b, b)
$$
and the self-similarity of $\Delta$ becomes obvious: in the new coordinates, it is the Euclid's algorithm!
\end{proof}

\begin{figure}[h] \begin{multicols}{2}
\hfill \includegraphics[width=70mm]{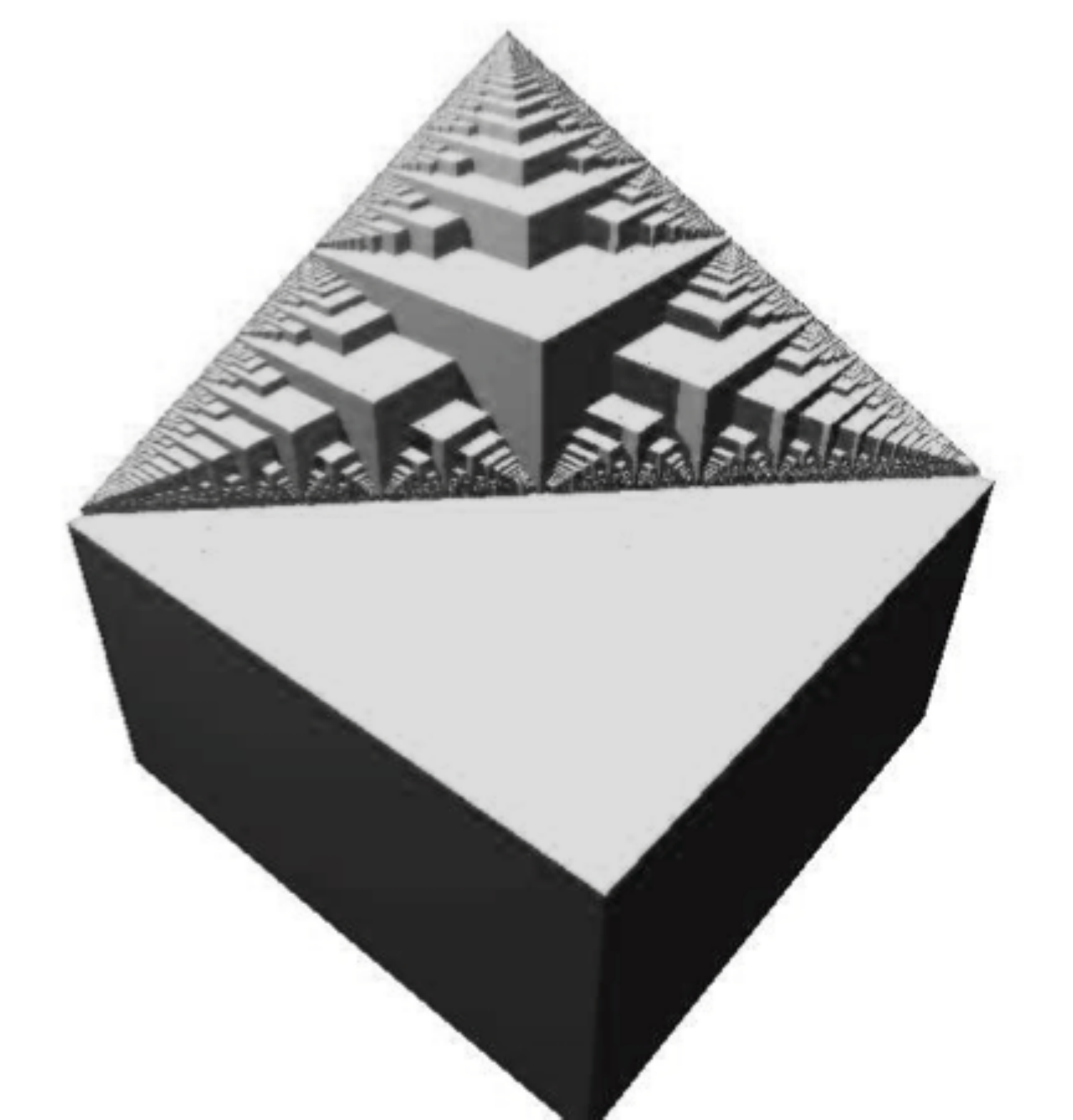} \hfill \caption{The ab-ziggurat (picture credit: Calegari-Walker~\cite[Fig.~1]{Caleg})} \label{ab} \hfill \includegraphics[width=70mm]{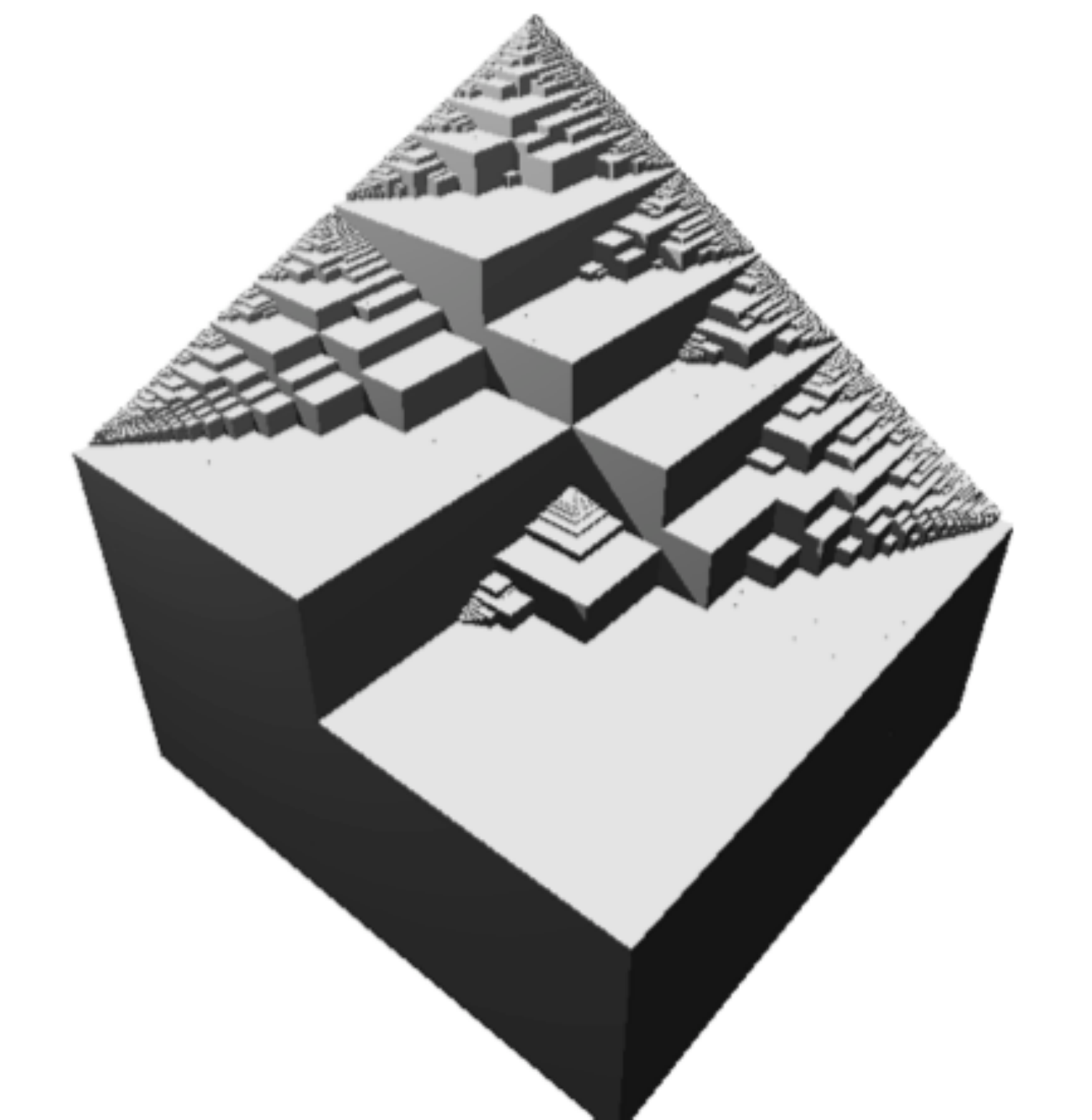} \hfill \caption{The abaab-ziggurat (picture credit:~\cite[Fig.~2]{Caleg})} \label{aabab}
\end{multicols} \end{figure}

To conclude this section, we would like to state some related interesting questions:
\begin{question}
Is there any direct dynamical proof of $T_{1,2}$-invariance of the set $\Delta$ or of its vertical symmetry? Do the transformations $T_{1,2}$ admit any dynamical interpretation? 
\end{question}


\begin{question}\footnote{After this text was finished, we became aware that Subhadip Chowdhury has managed to show ``asymptotic'' projective self-similarity near the ``fringes'' of the unit square for the ziggurats associated to some other positive words . }
What happens for ziggurats associated to other positive products? For instance, the $abaab$-ziggurat also seems to have some fractal, and possibly self-similar nature, see Fig.~\ref{aabab}. 
\end{question}

\begin{question}
Would the ziggurats in higher dimensions (for larger number of different homeomorphisms being multiplied) still look self-similar? (For the Jankins--Neuman higher-dimensional ziggurat, the proof of Theorem~\ref{ss} applies verbatim.) 
\end{question}

\begin{question}
What happens with the ziggurat if the defining Jankins--Neumann-like relation includes \emph{all} the homeomorphisms more than once; for instance, for the relation $abcbac=\id$?
\end{question}


\section{Equivalence}\label{s:equivalence}

This section is devoted to a way of deducing the Jankins--Neumann conjecture from Calegari--Walker's formula~\eqref{formula}. To make such a deduction, let 
\begin{equation}\label{e:zR}
\tz = \{(x,y,z) \mid 0\le z\le R_{ab}(1-x,1-y)-1\}.
\end{equation}
It is easy to see that the Jankins--Neumann conjecture is equivalent to that $\tz=\tilde \tz$. The Calegari--Walker formula allows to represent $\tz$ as a union of parallelepipeds. Indeed, the inequalities $\frac{p_1}{q}\le x$, $\frac{p_2}{q}\le y$ implying $R_{ab}(x,y)\ge \frac{p_1+p_2+1}{q}$ give for the set $\tz$ the representation
\begin{multline}\label{e:tz}
\tz =  \bigcup_{p_1,p_2,q \in \bbN, \atop p_1,p_2\le q} \{(x,y,z) \mid 1-x\le \frac{p_1}{q},\, 1-y\le \frac{p_2}{q} ,  \, z \le \frac{p_1+p_2+1}{q}-1, \, x,y,z\ge 0\}
\\
= \bigcup_{p_1,p_2,q \in \bbN,\atop p_1,p_2 \le q} \Pi\left( 1-\frac{p_1}{q}, 1-\frac{p_2}{q}, \frac{p_1+p_2 + 1}{q}-1\right).
%
\end{multline}
Take $p_1'=q-p_1, p_2'=q-p_2$. Then, 
$$
\Pi\left( 1-\frac{p_1}{q}, 1-\frac{p_2}{q}, \frac{p_1+p_2 + 1}{q}-1\right) =  \Pi\left( \frac{p_1'}{q}, \frac{p_2'}{q}, \frac{q+1-(p_1'+p_2')}{q}\right).
$$
Finally, denoting $p_3':=q-p_1'-p_2'+1$, we get
\begin{equation}\label{e:zigCW}
\tz= \bigcup_{p_1',p_2',p_3',q \in \bbN, \atop p_1'+p_2'+p_3'= q+1} \Pi\left( \frac{p_1'}{q}, \frac{p_2'}{q}, \frac{p_3'}{q}\right).
\end{equation}

To deduce Naimi's theorem (Jankins--Neumann conjecture) from the Calegari--Walker formula, we have to show that $\tz$ and $\tilde \tz$ coincide. The list of parallelepipeds in~\eqref{e:zigCW} contains all the parallelepipeds from the Jankins--Neumann conjecture, but also some others. Let us call a parallelepiped good if it is, up to the permutation of the coordinates, of the form $\Pi(\frac{1}{q}, \frac{p}{q}, \frac{q-p}{q})$ with coprime $p$ and $q$. Then, to prove the coincidence of $\tz$ and $\tilde \tz$ we have to show that any parallelepiped in~\eqref{e:zigCW} that is not a good one, is contained in one of the good ones. For the obvious monotonicity reasons, it suffices to check that its vertex $\left( \frac{p_1'}{q}, \frac{p_2'}{q}, \frac{p_3'}{q}\right)$ is contained there. 

Before proceeding to prove this, we notice (as it was already promised in Remark~\ref{r:min} earlier) that the list of good parallelepipeds is minimal: no good parallelepiped is contained in another one. Indeed, for any of the parallelepipeds listed in~\eqref{e:zigCW}, the sum of any two of three coordinates does not exceed~$1$. As for a good parallelepiped $\Pi\left( \frac{p}{q}, \frac{q-p}{q}, \frac{1}{q}\right)$ the sum of its first two coordinate equals~$1$, it could be contained only in a parallelepiped of the form $\Pi\left( \frac{p'}{q'}, \frac{q'-p'}{q'}, \frac{1}{q'}\right)$ with $\frac{p'}{q'}=\frac{p}{q}$ and $q'<q$. But as $p$ and $q$ are coprime, it is impossible.

Now, let $p_1+p_2+p_3=q+1$, with $p_1,p_2,p_3>1$ (thus, $\Pi\left(\frac{p_1}{q},\frac{p_2}{q},\frac{p_3}{q} \right)$ is not good). We are going then to find a good parallelepiped that contains $\Pi( \frac{p_1}{q}, \frac{p_2}{q}, \frac{p_3}{q})$. Without loss of generality, we can assume $p_1\le p_2\le p_3$. Then, we will be looking for coprime $m<n$ such that
$$
\Pi( \frac{p_1}{q}, \frac{p_2}{q}, \frac{p_3}{q})\subset \Pi( \frac{1}{n}, \frac{m}{n}, \frac{n-m}{n}),
$$ 
or, what is the same,
$$
\frac{p_1}{q} \le \frac{1}{n}, \quad \frac{p_2}{q}\le \frac{m}{n}, \quad \frac{p_3}{q}\le \frac{n-m}{n}.
$$
Or, equivalently,
\begin{equation}\label{eq:mn}
n\le \left[\frac{p_1}{q}\right], \quad \frac{p_2}{q}\le \frac{m}{n} \le 1- \frac{p_3}{q}.
\end{equation}

Denote $N:=\left[\frac{p_1}{q}\right]$. Then, the desired~\eqref{eq:mn} could be reformulated in the following way: we want to prove that there is a fraction $\frac{m}{n}$ of denominator less than $N$ that belongs to the interval $[\frac{p_2}{q} , \frac{q-p_3}{q}]$. Hence it is natural to consider \emph{Farey sequence} of order~$N$.

Recall that the Farey sequence $F_N$ of order $N$ is the sequence of completely reduced fractions between~$0$ and~$1$ which have denominators less than or equal to~$N$, arranged by increasing. In the proof of the equivalence of two ziggurats we will use one of its properties, namely, for any two consecutive fractions $\frac{a}{b}<\frac{c}{d}$ in any of the Farey sequences, their denominators are coprime, and moreover, $\frac{c}{d}-\frac{a}{b}=\frac{1}{bd}.$ As for details, we refer the reader to ~\cite{F2} (as well as to the original historical papers~\cite{F0,F1}). 

Suppose there is no $\frac{m}{n} \in F_N$ between $\frac{p_2}{q}$ and $\frac{q-p_3}{q}$. Then, take the two consecutive fractions $\frac{a}{b}, \frac{c}{d} \in F_N$ such that $\frac{a}{b}<\frac{p_2}{q}\le\frac{q- p_3}{q}<\frac{c}{d}$. In this case,
$$
\frac{p_2}{q}-\frac{a}{b}\ge \frac{1}{bd}, \quad \frac{c}{d}-\frac{q - p_3}{q}\ge \frac{1}{dq}.
$$
Thus,
\begin{multline} \label{eq:bdq}
\frac{q - p_3}{q} - \frac{p_2}{q}=\left(\frac{c}{d}-\frac{a}{b}\right) - \left(\frac{c}{d}-\frac{q - p_3}{q}\right) - \left(\frac{p_2}{q}-\frac{a}{b}\right) \le\\
\le \frac{1}{bd} - \frac{1}{bq} - \frac{1}{dq} = \frac{q-(b+d)}{bd\cdot q}.
\end{multline}

If both $b,d \ge 2$, then $bd \ge b+d\ge N+1 > \frac{q}{p_1}$, we see that the right hand side of~\eqref{eq:bdq} can be estimated as 
$$
\frac{q-(b+d)}{bd\cdot q} < \frac{q-\frac{q}{p_1}}{\frac{q}{p_1}\cdot q} = \frac{p_1-1}{q}.
$$
Thus, $\frac{q - p_3}{q} - \frac{p_2}{q}< \frac{p_1-1}{q}$, contradicting the equation $p_1+p_2+p_3=q+1$.

Finally we have to consider the possibilities $b=1$ or $d=1$. Though, as $\frac{p_2}{q} + \frac{p_3}{q} < 1$, we have $\frac{p_2}{q}<\frac{1}{2}$ and hence $d \ge 2$. Finally, if $b=1$ and thus $\frac{a}{b} = 0, \frac{c}{d}=\frac{1}{N}$, we have 
$$
1-\frac{p_3}{q}=\frac{p_2+p_1-1}{q}\ge \frac{2p_1-1}{q} \ge \frac{3}{2} \cdot \frac{p_1}{q}\ge \frac{3}{2} \cdot \frac{1}{N+1}.
$$ 
Then, the inequality $1-\frac{p_3}{q}<\frac{c}{d}=\frac{1}{N}$ is impossible, as it would imply $\frac{3}{2(N+1)} < \frac{1}{N}$ and hence $N< 2$. Though, $\frac{p_1}{q}\le \frac{1}{2}$ and hence $N\ge 2$.



\begin{figure}[h] \begin{multicols}{2}
\hfill \includegraphics[width=60mm]{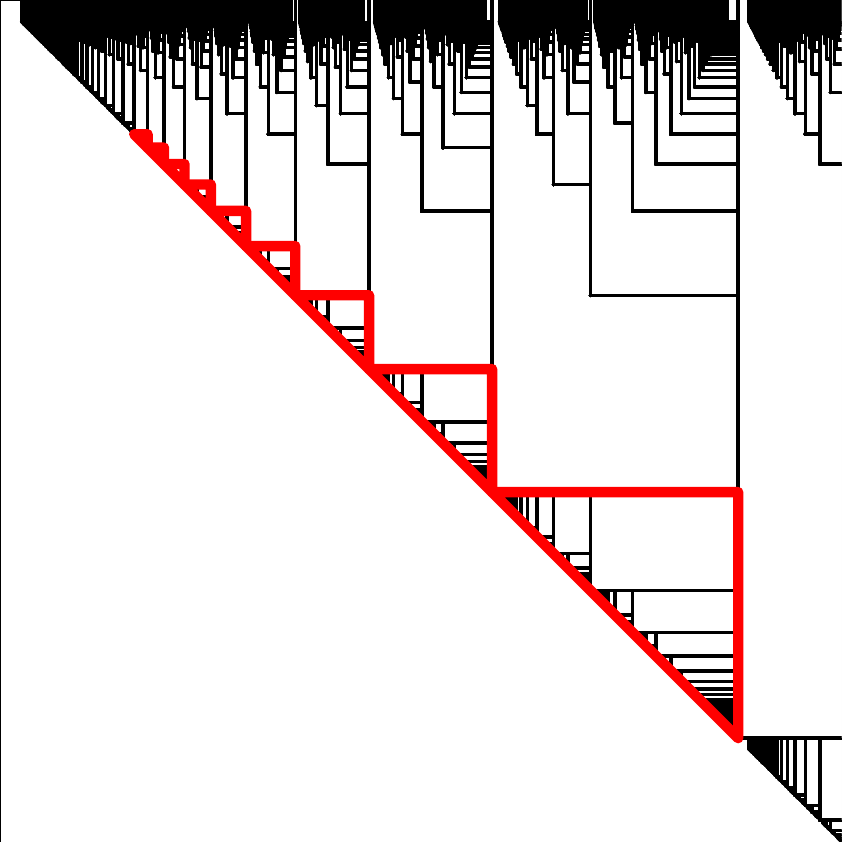} \hfill \caption{The $\Tr_n$ triangles marked on the view on the ab-ziggurat from the top} \label{f:t-red} \hfill \includegraphics[width=60mm]{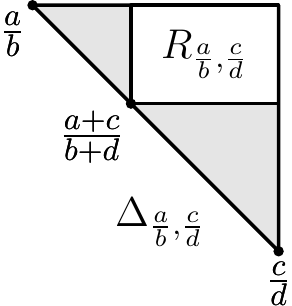} \hfill \caption{Triangle $\Tr_{\frac{a}{b},\frac{c}{d}}$ and two its descendants $\Tr_{\frac{a}{b},\frac{a+c}{b+d}}$ and $\Tr_{\frac{a+c}{b+d},\frac{c}{d}}$.} \label{f:induction}
\end{multicols} \end{figure}

These contradictions conclude the proof of the equivalence. 


Finally, we would like to note a beautiful connection of ziggurat and Farey series. Consider the set of triangles on the plane $Oxy$, enclosed by lines $x+y=1$, $x=1/n$ and $y=n/(n+1)$, where $n=2,3,\dots$. Let us denote $n$th such triangle by~$\Tr_n$:
\[
\Tr_n := \{(x,y)\in [0;1)^2 \mid x+y>1,\, x< 1/n,\, y< n/(n+1) \}  
\]

These triangles naturally appear on the view from the top of the ab-ziggurat (Jankins-Neumann up to a linear transformation): see Fig.~\ref{proj} and~\ref{f:t-red}; on the latter they are marked by bold red lines. It is easy to see from the Jankins--Neumann's description, that all the vertices project into the diagonal $x+y = 1$ or outside these triangles. 

Now, looking on a Fig.~\ref{f:t-red}, we see that each of these triangles is decomposed into a rectangle and two more triangles, each of these triangles decomposes into a rectangle and two more, etc. Each new rectangle corresponds to a vertex $(x,1-x)$ with rational $x$, and it is interesting to study the order, in which these vertices appear, and also prove it formally. To do so, note that any of these triangles is of the form
$$\Tr_{\textstyle{\frac{a}{b},\frac{c}{d}}}:=\{(x,y)\in [0;1)^2 \mid x+y>1,\, x< \frac{c}{d},\, y< 1-\frac{a}{b} \} $$
for some $\frac{a}{b}<\frac{c}{d}$.

Let us show that the fractions $\frac{a}{b}$ and $\frac{c}{d}$ are adjacent in one of the Farey series, and the rectangle that subdivides this triangle starts from a point with the abscise that is the \emph{mediant} (``freshman sum'')  $\frac{a+c}{b+d}$ of $\frac{a}{b}$ and~$\frac{c}{d}$; see Fig.~\ref{f:induction}. Indeed, the subdivisions that we are observing are given by removing of the slices $\{(x,y) \mid R_{ab}(x,y)-1\ge r\}$ for some $r>0$. As $R_{ab}(\frac{p}{q},1-\frac{p}{q})-1=\frac{1}{q}$ for coprime $p,q$, the vertices appearing in any such slice are exactly those whose abscises belong to $[\frac{1}{r}]$-th Farey sequence. And when the triangle $\Tr_{\textstyle{\frac{a}{b},\frac{c}{d}}}$ is subdivided, it corresponds to the subdivision of an interval $[\frac{a}{b},\frac{c}{d}]$ of a Farey sequence; {as it is well-known (see~\cite{F2}), it is the mediant that subdivides it}.

Thus, the Farey sequences are not only a good tool for the study of the ziggurat, but they appear in this study in a natural way. We conclude this section by noticing an interesting consequence of the above arguments: we see that two fractions are neighbours in some Farey series if and only if the rectangles, starting at the corresponding points, have common segment.

\section*{Acknowledgements}

Author is exceedingly grateful to Danny Calegari, Walter Neumann, Alden Walker and Subhadip Chowdhury for their comments and cooperation and to Victor Kleptsyn for discussions and assistance.

\end{document}